\documentclass[12pt]{amsart}

\usepackage[margin=1in]{geometry}
\usepackage[T1]{fontenc}
\usepackage{newtxtext,newtxmath}
\usepackage{microtype}
\usepackage{latexsym,amscd, graphicx, color, amsthm, bm, amsmath, cancel, enumitem,mathtools}  
\usepackage{tikz, tikz-3dplot}
\usetikzlibrary{calc, backgrounds}
\usepackage{listofitems}
\usepackage{hyperref}
\usepackage{stackengine}

\numberwithin{equation}{section}

\newtheorem{theorem}{Theorem}[section]

\newtheorem{corollary}[theorem]{Corollary}
\newtheorem{lemma}[theorem]{Lemma}

\newtheorem{remark}[theorem]{Remark}
\newtheorem{definition}[theorem]{Definition}

\theoremstyle{definition}

\newcommand{\actseven}{%
  \mathord{\stackon[2pt]{7}{\scriptsize$\curvearrowright$}}%
}

\newcommand{\MRT}{{\mathrm{MRT}}}
\newcommand{\THC}{{\mathrm{THC}}}
\newcommand{\RHT}{{\mathrm{RHT}}}

\newcommand{\sort}{{\mathrm{sort}}}

\newcommand{\sign}{{\mathrm{sign}}}

\newcommand{\ch}{{\mathrm{ch}}}

\newcommand{\type}{{\mathrm{type}}}

\DeclareMathOperator{\Mob}{M\text{ö}b}
\newcommand{\symm}{{\mathfrak{S}}}

\newcommand{\SSS}{{\mathcal{S}}}

\newcommand{\PPPP}{{\mathfrak{P}}}

\newcommand{\CC}{{\mathbb{C}}}

\newcommand{\ZZ}{{\mathbb{Z}}}

\newcommand{\MMM}{\mathcal{M}}

\newcommand{\cyc}{\text{cyc}}

\begin{document}

\title[Forgotten characters]
{Forgotten characters}

\author{Kyle Celano}

\address{Department of Mathematics, Wake Forest University, Winston-Salem, NC, 27109, USA}
\email{celanok@wfu.edu}

\author{Brendon Rhoades}

\address{Department of Mathematics, UC San Diego, La Jolla, CA, 92039, USA}
\email{bprhoades@ucsd.edu}

\begin{abstract}
    A {\em partial permutation} of $[n] := \{1,\dots,n\}$ is a bijection $g: I \to J$ between two subsets $I,J \subseteq [n]$. Given a partial permutation $g$ of $[n]$, let $a_g \in \CC[\symm_n]$ be the group algebra sum of those permutations $w \in \symm_n$ which extend $g$. Informally, a partial permutation $g$ is obtained by forgetting some data in a genuine permutation.
    The forgotten symmetric functions  are the least-studied of the six `standard' bases for the ring of symmetric functions. We show that forgotten symmetric functions arise naturally in class function evaluations on partial permutations.
\end{abstract}

\maketitle

\section{Introduction and Main Result}
\label{sec:Introduction}

Let $\symm_n$ be the symmetric group on $[n] : = \{1,\dots,n\}$ and let $\CC[\symm_n]$ be its group algebra. Given a class function $\chi: \symm_n \to \CC$, we have the linear extension $\chi: \CC[\symm_n] \to \CC$ given by
\begin{equation}
\label{eq:main-character-problem}
   \chi(a) =   \chi\left( \sum_{w \in \symm_n} c_w \cdot w \right) = \sum_{w \in \symm_n} c_w \cdot \chi(w) \quad \quad \text{for } 
   a = \sum_{w \in \symm_n} c_w \cdot w \in \CC[\symm_n].
\end{equation}
 An important (and usually intractable) problem in algebraic combinatorics seeks to obtain meaningful, useful, or efficient interpretations of $\chi(a)$ when $\chi$ is an important class function and/or $a$ is an important group algebra element. 


We study group algebra elements obtained by forgetting some of the data in a permutation. Let $X$ be a finite set. A {\em partial permutation} of $X$ is a bijection $g: I \to J$ between two subsets $I,J \subseteq X$. The ambient set $X$ is part of the data of a partial permutation, but we avoid the more cumbersome notation $(g,X)$.  We write 
\begin{equation}
    \PPPP_X(I,J) := \{ \text{all partial permutations $g:I\to J$}\} 
\end{equation}
for the set of all partial permutations $g: I \to J$ with ambient set $X$ and
\begin{equation}
    \PPPP_X := \bigsqcup_{\substack{I,J \subseteq X \\ |I| = |J|}} \PPPP_X(I,J)
\end{equation}
for the set of all partial permutations of $X$. When $X = [n]$, we write $\PPPP_n(I,J)$ and $\PPPP_n$ instead of $\PPPP_{[n]}(I,J)$ and $\PPPP_{[n]}$.
For example, if $n = 8,\ I = \{1,2,4,5,7\},$ and $J = \{2,3,4,5,7\}$,  a possible partial permutation $g \in \PPPP_8(I,J)$ is
\[
g(1) = 5, \quad g(2) = 4, \quad g(4) = 2, \quad g(5) = 3, \quad g(7) = 7.
\]
Partial permutations in $\PPPP_n$ may be regarded as 0,1-matrices of size $n$ with at most one 1 in each row and column; this yields the {\em rook monoid}  \cite{Solomon02}.

Let  $g \in \PPPP_n(I,J)$ be a partial permutation. We define $a_g \in \CC[\symm_n]$ to be the group algebra element
\begin{equation}
    a_g : =\sum_{\substack{w \in \symm_n \\ w(i) = g(i) \text{ for all } i \in I}} w.
\end{equation}
In the example above, $a_g \in \CC[\symm_8]$ is a sum of $(8-5)! = 6$ permutations in $\symm_8$. Partial permutations have appeared in the context of permutation statistic asymptotics \cite{hamaker2025moments, loth2023permutation}. 

A partial permutation $g \in \PPPP_n(I,J)$  is encoded by a directed graph on the vertex set $[n]$ with edges $i \to g(i)$ for all $i \in I$. Each component of this graph is a directed path or a directed cycle. The {\em path type} $\mu = (\mu_1, \dots, \mu_k)$ of $g$ is obtained by listing the path sizes of this graph in weakly decreasing order\footnote{Here the size of a path is the number of {\bf vertices} in that path.}. The {\em cycle type} $\nu = (\nu_1,\dots,\nu_r)$ of $g$ is similarly obtained by listing the cycle sizes in $g$ in weakly decreasing order. Thus $\mu$ and $\nu$ are partitions with $|\mu| + |\nu| = n$. In the example from the last paragraph, we have the directed graph
\[
1 \to 5 \to 3 \quad \quad 2 \leftrightarrow 4 \quad \quad 6 \quad \quad \actseven \quad \quad  8
\]
with path type $\mu = (3,1,1)$ and cycle type $\nu = (2,1)$.

Let $\Lambda = \bigoplus_{\lambda \vdash n} \Lambda_n$ be the graded ring of symmetric functions over the ground field $\CC$. We assume familiarity with basic symmetric function theory as can be found in e.g. \cite{macdonald1998symmetric,stanley1999enumerative}. Bases of $\Lambda_n$ are indexed by partitions $\lambda \vdash n$. The `standard' six bases are the {\em monomial basis} $\{m_\lambda\}$, the {\em elementary basis} $\{ e_\lambda \}$, the {\em complete homogeneous basis} $\{ h_\lambda \}$, the {\em power sum basis} $\{p_\lambda\}$, the {\em forgotten basis} $\{f_\lambda\}$, and the {\em Schur basis} $\{s_\lambda\}$. Let $\langle -,-\rangle: \Lambda \times \Lambda \to \CC$ be the {\em Hall inner product} with respect to which the Schur basis $\{s_\lambda\}$ is orthonormal. Let $\omega: \Lambda \to \Lambda$ be the  involution on $\Lambda$ characterized by $\omega(e_\lambda) = h_\lambda$. Writing 
\begin{equation}
    R_n := \{ \text{all class functions $\chi: \symm_n \to \CC$} \}
\end{equation}
for the vector space of class functions $\chi: \symm_n \to \CC$, we have the {\em Frobenius characteristic isomorphism} $\ch_n: R_n \to \Lambda_n$ with formula
\begin{equation}
    \ch_n(\chi) :=  \frac{1}{n!} \sum_{w \in \symm_n} \chi(w) \cdot p_{\cyc(w)},
\end{equation}
where $\cyc(w) \vdash n$ is the cycle type of $w$. Let $R := \bigoplus_{n \geq 0} R_n$.
We extend $\ch_n$ to a graded vector space isomorphism 
\begin{equation}
\ch: R \to \Lambda \quad \text{where} \quad \ch := \bigoplus_{n \geq 0} \ch_n.
\end{equation}

As the nomenclature suggests, the forgotten symmetric functions $f_\lambda$ are the least studied of the standard six bases of $\Lambda$. 
They are related to the monomial symmetric functions by 
\begin{equation}
    f_\lambda = \omega(m_\lambda).
\end{equation}
We show that the $f_\lambda$ arise naturally in class function evaluations on partial permutation group algebra elements $a_g$. Given $\mu \vdash n$, let $\ell(\mu)$ be the number of parts of $\mu$ and write
\[
m(\mu)! := \prod_{i \geq 1} m_i(\mu)!,
\]
where $m_i(\mu)$ is the multiplicity of $i$ as a part of $\mu$.

\begin{theorem}
    \label{thm:forgotten}
    Let $g \in \PPPP_n$ have path type $\mu$ and cycle type $\nu$ and let $\chi: \symm_n \to\CC$ be a class function. We have
    \begin{equation}
        \chi(a_g) = (-1)^{|\mu| - \ell(\mu)} m(\mu)! \cdot  \langle \ch(\chi), f_\mu \cdot p_\nu \rangle.
    \end{equation}
\end{theorem}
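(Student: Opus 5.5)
The plan is to reduce the statement to a symmetric function identity by linearity, and then to prove that identity by Möbius inversion on the lattice of set partitions.

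\emph{Reduction.} For a class function $\chi$ and any $w$ of cycle type $\lambda$, the definition of $\ch$ together with $\langle p_\lambda, p_\rho\rangle = z_\lambda \delta_{\lambda\rho}$ gives $\langle \ch(\chi), p_\lambda\rangle = \chi(w)$. Hence
\[
\chi(a_g) = \Big\langle \ch(\chi), \sum_{w \text{ extends } g} p_{\cyc(w)} \Big\rangle .
\]
It therefore suffices to prove the identity in $\Lambda_n$
\[
\sum_{w \text{ extends } g} p_{\cyc(w)} = (-1)^{|\mu|-\ell(\mu)}\, m(\mu)!\, f_\mu\, p_\nu .
\]

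\emph{Combinatorics of extensions.} Let $k=\ell(\mu)$ and let $P_1,\dots,P_k$ be the paths of $g$, where $P_i$ has $\mu_i$ vertices. A permutation $w$ extends $g$ exactly when it sends the terminal vertex of each path to the initial vertex of some path. Extensions are therefore in bijection with permutations $\sigma\in\symm_k$, where $\sigma(i)=j$ means that the end of $P_i$ maps to the start of $P_j$.

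The cycles of $w$ are of two kinds. First, the cycles of $g$ survive and contribute $p_\nu$. Second, for each cycle $C$ of $\sigma$ there is one cycle of $w$, of length $\sum_{i\in C}\mu_i$. Grouping permutations $\sigma$ by the set partition $\pi$ of $[k]$ formed by their cycles, a block $B$ carries $(|B|-1)!$ cyclic orders. Writing $p_{\mu,\pi}:=\prod_{B\in\pi} p_{\sum_{i\in B}\mu_i}$, the left side becomes
\[
p_\nu \sum_{\pi} \prod_{B\in\pi}(|B|-1)!\; p_{\mu,\pi}.
\]

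\emph{Möbius inversion (the main step).} Define $\tilde m_{\mu,\pi}:=\prod_{B\in\pi}$ (sum of monomials $x_{a_B}^{\sum_{i\in B}\mu_i}$) with the indices $a_B$ pairwise distinct across blocks. Expanding $p_{\mu,\pi}$ as a sum over all functions from blocks to variable indices, and recording which blocks land on equal indices, gives
\[
p_{\mu,\pi}=\sum_{\sigma\ge\pi}\tilde m_{\mu,\sigma}.
\]
At the bottom element $\hat 0$ we have $\tilde m_{\mu,\hat 0}=m(\mu)!\,m_\mu$. Möbius inversion on the partition lattice, with $\mu(\hat0,\pi)=\prod_{B}(-1)^{|B|-1}(|B|-1)!$, then yields
\[
m(\mu)!\,m_\mu=\sum_\pi \prod_B(-1)^{|B|-1}(|B|-1)!\;p_{\mu,\pi}.
\]
I expect checking this expansion and the multiplicity factor $m(\mu)!$ carefully to be the main obstacle; the remaining steps are sign bookkeeping.

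\emph{Applying $\omega$.} Since $\omega(p_r)=(-1)^{r-1}p_r$, we get $\omega(p_{\mu,\pi})=(-1)^{|\mu|-|\pi|}p_{\mu,\pi}$, where $|\pi|$ is the number of blocks. Combining this with $\sum_B(|B|-1)=k-|\pi|$, the signs collect to a global factor:
\[
m(\mu)!\,f_\mu=(-1)^{|\mu|-k}\sum_\pi\prod_B(|B|-1)!\;p_{\mu,\pi}.
\]
Multiplying by $p_\nu$ and comparing with the expression obtained from the combinatorics of extensions proves the identity, and hence the theorem.
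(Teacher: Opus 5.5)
Your proof is correct and follows the paper's argument: the same reduction to $\sum_{w} p_{\cyc(w)} = p_\nu \sum_{\pi} \prod_{B}(|B|-1)!\, p_{\mu,\pi}$, obtained by placing cyclic orders on the paths in each block, and then the same identification with $f_\mu$. Your lattice $\Pi_k$ of set partitions of the paths is exactly the paper's interval above $\sigma$ in the partition lattice of $P_g$. The only difference is that the paper cites Doubilet's expansion (Theorem~\ref{thm:doubilet}), whereas you prove it by M\"obius inversion of $p_{\mu,\pi}=\sum_{\sigma\ge\pi}\tilde m_{\mu,\sigma}$ followed by $\omega$; your multiplicity $\tilde m_{\mu,\hat 0}=m(\mu)!\,m_\mu$ and sign $(-1)^{k-|\pi|}(-1)^{|\mu|-|\pi|}=(-1)^{|\mu|-k}$ both check out, so your argument is self-contained.
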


Theorem~\ref{thm:forgotten} says that class function evaluations on the group algebra elements $a_g$ (obtained by {\em forgetting} some of the data of a permutation in $\symm_n$) are Hall inner product evaluations involving {\em forgotten} symmetric functions. The forgotten basis $\{f_\lambda\}$ was therefore well-named.
For the partial permutation in our running example, we have $\mu = (3,1,1)$ and $\nu = (2,1)$ so that 
\[
m(\mu)! = m_1(\mu)! \cdot m_2(\mu)! \cdot m_3(\mu)! = 2! \cdot 0! \cdot 1! = 2
\]
and
\[
\chi(a_g) = (-1)^{5-3} \cdot 2 \cdot \langle \ch(\chi), f_{311} \cdot p_{21} \rangle 
\]
for any class function $\chi: \symm_8 \to \CC$.  In Theorem~\ref{thm:path-power-to-schur} we give a tableau rule for evaluating Theorem~\ref{thm:forgotten} when $\chi = \chi^\lambda$ is an irreducible character. Given two subsets $I,J \subseteq [n]$ of the same size, we have the group algebra element
\[
\{ I,J\} := \sum_{\substack{w \in \symm_n \\ w(I) = J}} w \in \CC[\symm_n]
\]
obtained by only `remembering' that a permutation $w$ carries the set $I$ to the set $J$, but forgetting all else. In Theorem~\ref{thm:set-irreducible-character} we apply our combinatorial rule to give an explicit formula for the irreducible character evaluations $\chi^\lambda\{I,J\}.$ See \cite{Lindzey, Romero} for further contemporary applications of the forgotten basis.

The rest of the paper is organized as follows. In {\bf Section~\ref{sec:Proof}}, we prove Theorem~\ref{thm:forgotten} using a result of Doubilet and the M\"obius function of the set partition lattice. {\bf Section~\ref{sec:Monotonic}} describes a combinatorial rule for  Theorem~\ref{thm:forgotten} when $\chi = \chi^\lambda$ is an irreducible character of $\symm_n$. {\bf Section~\ref{sec:Elements}} applies this combinatorial rule to compute the value of $\chi^\lambda\{I,J\}.$

\section{Proof of Theorem~\ref{thm:forgotten}}
\label{sec:Proof}

Our proof of Theorem~\ref{thm:forgotten} rests on a nearly-forgotten result from the Ph.D. thesis of P. Doubilet. In order to state Doubilet's theorem, we need some notation. We refer the reader to e.g. \cite{stanley2012enumerative} for the relevant background.

Let $\Pi_n$ be the lattice of set partitions of $[n]$ with the order
\[
\sigma \leq \pi \quad \Leftrightarrow \quad \sigma \text{ refines } \pi.
\]
The lattice $\Pi_n$ has minimum element $\hat{0} = \{1 / 2 / \cdots / n \}$ and maximum element $\hat{1} = \{1,2,\dots,n\}$. Given $\sigma \leq \pi$ in $\Pi_n$, each block $B$ of $\pi$ is a disjoint union of blocks $C^B_1, C^B_2, \dots, C^B_{s_B}$ of $\sigma$ for some $s_B > 0$.
The interval $[\sigma,\pi]$ is isomorphic to the direct product
\begin{equation}
\label{eq:partition-lattice-intervals}
    [\sigma,\pi] \cong \prod_{B \in \pi} \Pi_{s_B}
\end{equation}
of smaller partition lattices.

Let $\Mob_{\Pi_n}: \Pi_n \times \Pi_n \to \ZZ$ be the M\"obius function of $\Pi_n$. It is well-known that
\begin{equation}
    \label{eq:full-lattice}
   \Mob_{\Pi_n} (\hat{0},\hat{1}) = (-1)^{n-1} (n-1)!.
\end{equation}
Applying \eqref{eq:partition-lattice-intervals} and the multiplicative property of M\"obius functions
\begin{equation}
\label{eq:mobius-product}
    \Mob_{P \times Q}((p,q),(p',q')) = \Mob_P(p,p') \cdot \Mob_Q(q,q')
\end{equation}
on arbitrary finite posets $P,Q$ gives a formula for $\Mob_{\Pi_n}[\sigma,\pi]$ for all $\sigma \leq \pi$. In particular, we have
\begin{equation}
    \label{eq:general-mobius}
    \Mob(\sigma,\pi) = \prod_{B \in \pi} (-1)^{s_B-1} \cdot (s_B - 1)! \quad 
    \text{where $B \in \pi$ is a union of $s_B$ blocks of $\sigma$.}
\end{equation}

Given a set partition $\pi$ of $[n]$, the {\em type} of $\pi$ is the integer partition $\type(\pi) \vdash n$ obtained by listing the block sizes of $\pi$ in weakly decreasing order. For example, if $\pi = \{1, 3,5 / 2,6/4,7\}$ we have $\type(\pi) = (3,2,2)$. Doubilet's result is as follows.

\begin{theorem} {\em (Doubilet {\cite[Theorem 8(iii)]{Doubilet1972}}) }
    \label{thm:doubilet}
    Let $\mu \vdash n$ and let $\sigma$ be a set partition of $[n]$ with $\type(\sigma) = \mu$. We have 
    \begin{equation}
        f_\mu = \frac{(-1)^{n - \ell(\mu)}}{m(\mu)!} \cdot  \sum_{\sigma \leq \pi} |\textnormal{M\"{o}b}(\sigma,\pi)|  \cdot p_{\type(\pi)}. 
    \end{equation}
\end{theorem}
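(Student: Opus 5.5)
The plan is to obtain an expansion of the monomial symmetric function $m_\mu$ in power sums by M\"obius inversion on $\Pi_n$, and then apply $\omega$; the signs should combine so that only absolute values of M\"obius functions remain. I will work with countably many variables $x_1,x_2,\dots$. For a set partition $\tau$ of $[n]$, put $p_\tau := \prod_{B \in \tau} p_{|B|}$, so that $p_\tau = p_{\type(\tau)}$.

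First, expand $p_\tau$ as a sum over functions $\phi:[n]\to\{1,2,\dots\}$ that are constant on each block of $\tau$, where $\phi$ contributes the monomial $\prod_{i=1}^n x_{\phi(i)}$. The \emph{kernel} of $\phi$ is the set partition whose blocks are the nonempty fibers $\phi^{-1}(j)$. Grouping the functions $\phi$ by their kernel $\rho$ gives
\[
p_\tau = \sum_{\rho \geq \tau} \tilde m_\rho,
\]
where $\tilde m_\rho$ is the sum of the monomials of those $\phi$ whose kernel is exactly $\rho$. The map $\phi$ has kernel exactly $\rho$ precisely when it assigns distinct variables to the blocks of $\rho$. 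Each monomial $x_{j_1}^{a_1}\cdots x_{j_k}^{a_k}$ with $(a_1,\dots,a_k)$ a rearrangement of $\type(\rho)$ and distinct $j$'s arises from exactly $m(\type(\rho))!$ such assignments, since blocks of equal size may be permuted among themselves. Hence
\[
\tilde m_\rho = m(\type(\rho))!\, m_{\type(\rho)}.
\]

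Next, apply M\"obius inversion on $\Pi_n$ from above. With $\type(\sigma)=\mu$ this gives
\[
m(\mu)!\, m_\mu = \tilde m_\sigma = \sum_{\pi \geq \sigma} \Mob(\sigma,\pi)\, p_{\type(\pi)} .
\]
Now apply $\omega$, using the standard fact $\omega(p_\lambda) = (-1)^{|\lambda| - \ell(\lambda)} p_\lambda$ and the relation $f_\mu = \omega(m_\mu)$:
\[
m(\mu)!\, f_\mu = \sum_{\pi \geq \sigma} (-1)^{n-\ell(\type(\pi))}\, \Mob(\sigma,\pi)\, p_{\type(\pi)} .
\]

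The one step needing care is the sign bookkeeping. By \eqref{eq:general-mobius}, the sign of $\Mob(\sigma,\pi)$ is $\prod_{B\in\pi}(-1)^{s_B-1}$. Since $\sum_{B \in \pi} s_B = \ell(\mu)$, this sign equals $(-1)^{\ell(\mu)-\ell(\type(\pi))}$. Therefore
\[
(-1)^{n-\ell(\type(\pi))}\,\Mob(\sigma,\pi) = (-1)^{n-\ell(\mu)}\,|\Mob(\sigma,\pi)| .
\]
Substituting this and dividing by $m(\mu)!$ yields Theorem~\ref{thm:doubilet}. The main points to verify are the exact count $m(\mu)!$ relating $\tilde m_\sigma$ to $m_\mu$ and the fact that the M\"obius sign depends only on the ranks of $\sigma$ and $\pi$; both follow directly from the definitions and \eqref{eq:general-mobius}.
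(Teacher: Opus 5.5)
Your proof is correct and complete. The following steps all check out:
\begin{itemize}
\item the identity $p_\tau=\sum_{\rho\ge\tau}\tilde m_\rho$ obtained by grouping maps by kernel;
\item the count $\tilde m_\rho = m(\type(\rho))!\,m_{\type(\rho)}$;
\item M\"obius inversion from above on $\Pi_n$;
\item the fact $\omega(p_\lambda)=(-1)^{|\lambda|-\ell(\lambda)}p_\lambda$;
\item the observation that $\Mob(\sigma,\pi)$ has sign $(-1)^{\ell(\mu)-\ell(\type(\pi))}$, since $\sum_{B\in\pi}s_B=\ell(\mu)$.
\end{itemize}

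The paper gives no proof of this statement. It imports the result from Doubilet's thesis and relies on Remark~\ref{rmk:notation-warning} to translate his notation ($k_\lambda$, $s_\lambda$, $\theta$) and his sign convention for $f_\mu$ into modern terms. Your argument follows Doubilet's set-partition approach (M\"obius inversion on $\Pi_n$), but it is written directly in current conventions. The citation buys brevity. Your route buys self-containment and an independent check of the prefactor $(-1)^{n-\ell(\mu)}$ that the Remark asserts: the $\omega$-sign $(-1)^{n-\ell(\type(\pi))}$ and the M\"obius sign combine into a global sign precisely because $\Mob(\sigma,\pi)$'s sign depends only on the rank difference.

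Your argument also fits well with the paper's proof of Theorem~\ref{thm:forgotten}. There, the factor $|\Mob(\sigma,\pi)|=\prod_{B\in\pi}(s_B-1)!$ arises by counting cyclic orders of paths. In your derivation the same quantity comes out of M\"obius inversion. So combining your identity $m(\mu)!\,f_\mu=(-1)^{n-\ell(\mu)}\sum_{\pi\ge\sigma}|\Mob(\sigma,\pi)|\,p_{\type(\pi)}$ with \eqref{eq:combinatorial-expansion} gives a proof of Theorem~\ref{thm:forgotten} that is independent of Doubilet's thesis.
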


\begin{remark}
    \label{rmk:notation-warning}
    Doubilet's notation differs significantly from modern conventions. He writes $k_\lambda$ for the monomial symmetric function and $s_\lambda$ for the power sum symmetric function ({\bf not} the Schur function). He uses $\theta: \Lambda \to \Lambda$ to denote the $\omega$ involution. We use the now-standard definition of the forgotten basis as the image under the monomial basis of this involution, but Doubilet's $f_\mu$ differs from ours by a factor of $(-1)^{n - \ell(\mu)}$.
\end{remark}

We use Theorem~\ref{thm:doubilet} to prove Theorem~\ref{thm:forgotten}. Let $g \in \PPPP_n(I,J)$ be a partial permutation with path type $\mu$ and cycle type $\nu$ where $I,J \subseteq [n]$ satisfy $|I|=|J|$. Let $\chi: \symm_n \to \CC$ be a class function. For any $w \in \symm_n$, we have
\begin{equation}
    \chi(w) = \langle \ch(\chi), p_{\cyc(w)} \rangle,
\end{equation}
where $\cyc(w) \vdash n$ is the cycle type of $w$. 
The directed graph of $g$ partitions its vertex set $[n]$ into disjoint cycles and paths. Let $\sigma$ be the set partition of 
\[
P_g: =\{ 1 \leq i \leq n \,:\, \text{$i$ lies in a path of $g$} \}
\]
whose blocks are the paths. For our example partial permutation of $[8]$ with graph 
\[
1 \to 5 \to 3 \quad \quad 2 \leftrightarrow 4 \quad \quad 6 \quad \quad \actseven \quad \quad  8,
\]
we have 
\[P_g = \{1,3,5,6,8\} \quad \text{ and  } \quad  \sigma = \{ 1, 3,5 / 6 / 8\}.\] Observe that $\type(\sigma) = \mu$.
Theorem~\ref{thm:forgotten} will be proven if we can show
\begin{equation}
    \label{eq:combinatorial-expansion}
    \sum_{\substack{w \in \symm_n \\ w(i) = g(i) \text{ for all } i \in I}} p_{\cyc(w)} =  p_\nu \cdot \sum_{\sigma \leq \pi} |\Mob(\sigma,\pi)| \cdot p_{\type(\pi)}.
\end{equation}

Equation~\eqref{eq:combinatorial-expansion} may be verified combinatorially as follows. A typical permutation $w \in \symm_n$ for which $w(i) = g(i)$ for all $i \in I$ is obtained by concatenating
the directed paths of $g$ to form directed cycles until no directed paths remain.  One way to do this for our example partial permutation yields the cycle notation
\[
w = (\underline{1,5,3} \, , \, \underline{8} ) (2,4) (\underline{6}) (7) \in \symm_8
\]
where the directed paths in the partial permutation $g$ have been underlined.
Any permutation $w \in \symm_n$ so obtained yields a set partition $\pi(w)$ of $P_g$ where $i \sim j$ if and only if $i$ and $j$ belong to the same cycle of $w$. In our example we have
\[
\pi(w) = \{ 1,3,5,8 / 6 \}.
\]
We have the refinement relation $\sigma \leq \pi(w)$, and the permutation $w \in \symm_n$ contributes 
\begin{equation}
    p_\nu \cdot p_{\type(\pi(w))} 
\end{equation}
to the left-hand side of \eqref{eq:combinatorial-expansion}.

Fix a set partition $\pi$ of $P_g$ with $\sigma \leq \pi$. Each block $B$ of $\pi$ is a disjoint union of blocks $C^B_1, C^B_2, \dots, C^B_{s_B}$ of $\sigma$, each of which is a path in $g$. The summand of the right-hand side of \eqref{eq:combinatorial-expansion} indexed by $\pi$  contributes 
\begin{equation}
\label{eq:rhs-contribution}
    p_\nu  \cdot |\Mob(\sigma,\pi)| \cdot p_{\type(\pi)} = \prod_{B \in \pi} (s_B - 1)! \cdot p_\nu \cdot p_{\type(\pi)},
\end{equation}
where we applied Equation~\eqref{eq:general-mobius}. On the other hand, a typical permutation $w \in \symm_n$ indexing the left-hand side of \eqref{eq:combinatorial-expansion} for which $\pi(w) = \pi$ is obtained by placing a cyclic order on the paths $C^B_1, C^B_2, \dots, C^B_{s_B}$ for each block $B \in \pi$; there are $\prod_{B \in \pi} (s_B-1)!$ ways to do this. We conclude that summing \eqref{eq:rhs-contribution} over all set partitions $\pi$ with $\sigma \leq \pi$ gives the left-hand side of \eqref{eq:combinatorial-expansion}. This proves \eqref{eq:combinatorial-expansion} and completes the proof of Theorem~\ref{thm:forgotten}.

\section{Monotonic Ribbon Tilings}
\label{sec:Monotonic}

Hamaker and Rhoades \cite{HR} gave a combinatorial interpretation of the character evaluation $\chi(a_g)$ in Theorem~\ref{thm:forgotten} when $\chi = \chi^\lambda$ is the irreducible character of $\symm_n$ associated to $\lambda \vdash n$. We use a theorem of Allen and Mason to give a shorter proof of this result. 
The combinatorial objects introduced by Hamaker and Rhoades are as follows.

\begin{figure}
\begin{center}
\begin{tikzpicture}
     \node (A) at (0,0) {\begin{tikzpicture}[yscale=-1,scale=.5]

\def\L{{10,9,4,2,2}}
\pgfmathsetmacro{\len}{dim(\L)}

\foreach \y in {1,...,\len}{
    \pgfmathsetmacro{\j}{\L[\y-1]}
    \foreach \i in {1,...,\j}{
        \draw (\i-.5,\y-.5) rectangle (\i+.5,\y+.5);
        }
}

\begin{scope}[on background layer]
\tikzset{every path/.style={line width = 7pt,color=black,line cap=round,opacity=.25,rounded corners}}
\draw (10+.25,1)--(10-.25,1);
\draw (9,1)--(9,2)--(7,2);
\draw (6+.25,2)--(6-.25,2);
\draw (8,1)--(5,1)--(5,2)--(4,2)--(4,3)--(3,3);
\draw (4,1)--(1,1)--(1,5);
\draw (3,2)--(2,2)--(2,5);
\end{scope}
\end{tikzpicture}};
\node (B) at (6,0) {
\begin{tikzpicture}[yscale=-1,rotate=-45, xscale=-1,rotate=45, scale=.5]
\def\L{{10,9,4,2,2}}
\pgfmathsetmacro{\len}{dim(\L)}

\foreach \y in {1,...,\len}{
    \pgfmathsetmacro{\j}{\L[\y-1]}
    \foreach \i in {1,...,\j}{
        \draw (\i-.5,\y-.5) rectangle (\i+.5,\y+.5);
        }
}

\begin{scope}[on background layer]
\tikzset{every path/.style={line width = 7pt,color=black,line cap=round,opacity=.25,rounded corners}}
\draw (10+.25,1)--(10-.25,1);
\draw (9,1)--(9,2)--(7,2);
\draw (6+.25,2)--(6-.25,2);
\draw (8,1)--(5,1)--(5,2)--(4,2)--(4,3)--(3,3);
\draw (4,1)--(1,1)--(1,5);
\draw (3,2)--(2,2)--(2,5);
\end{scope}
\begin{scope}[shift={(.5,.5)}]



    
    
\end{scope}
\end{tikzpicture}

};

\end{tikzpicture}
\end{center}
\caption{A monotonic ribbon tiling (left) and a tunnel hook covering (right).}
\label{fig:t-m}
\end{figure}


\begin{definition}
\label{def:mrt} {\em (Hamaker--Rhoades \cite{HR})}
    A {\em monotonic ribbon tiling} of a partition shape $\lambda$ is a covering of $\lambda$ by lattice paths (called {\em ribbons}) that start on the lower boundary of $\lambda$ and progress up and to the right. The {\em sign} of a monotonic ribbon tiling $T$ is $\sign(T):=(-1)^k$, where $k$ is the total number of up steps taken by the ribbons. The {\em type} of $T$ is the composition $\mu=(\mu_1,\dots,\mu_r)$ where $\mu_i$ is the size of the ribbon starting in the $i$-th leftmost column containing the start of a ribbon. Let
    \[
        \MRT_{\lambda,\mu} := \{ \text{all monotonic ribbon tilings of shape $\lambda$ and type $\mu$} \}.
    \]
\end{definition}

An example monotonic ribbon tiling $T$ is shown on the left of Figure~\ref{fig:t-m}. The shape of $T$ is $\lambda = (10,9,4,2,2)$, the type is $\mu = (8,5,8,1,4,1)$, and the sign is $(-1)^{4+3+2+0+1+0} = +1$.
Allen and Mason introduced a similar set of tilings in their study of the inverse Kostka matrix.\footnote{In fact, Allen--Mason defined a more general set of tilings which cover composition diagrams. Definition~\ref{def:thc} is their construction in the special case of partitions.}

\begin{definition}
\label{def:thc}
{\em (Allen--Mason \cite{allen_combinatorial_2025})}
    A {\em tunnel hook covering} of a partition shape $\lambda$ is a covering of $\lambda$ by lattice paths (called tunnel hooks) that start on the right boundary of $\lambda$ and progress down and to the left. The {\em sign} of a tunnel hook covering $T$ is $\sign(T):=(-1)^k$, where $k$ is the total number of down steps taken by the tunnel hooks. The {\em type} of $T$ is the composition $\mu=(\mu_1,\dots,\mu_r)$ where $\mu_i$ is the size of the tunnel hook starting in the $i$-th topmost row containing the start of a tunnel hook.  
    \[
        \THC_{\lambda,\mu} := \{\text{all tunnel hook coverings of shape $\lambda$ and type $\mu$} \}.
    \]
\end{definition}

A tunnel hook covering $T$ of shape $\lambda = (5,5,3,3,2,2,2,2,2,1)$ is shown on the right of Figure~\ref{fig:t-m}. The type of $T$ is $\mu = (8,5,8,1,4,1)$ and the sign is $(-1)^{3+1+5+0+2+0} = +1.$ As suggested by Figure~\ref{fig:t-m}, monotonic ribbon tilings and tunnel hook coverings are essentially the same objects. For a partition $\lambda \vdash n$, write $\lambda'$ for the conjugate partition obtained by transposing across the main diagonal.

\begin{lemma}\label{lem:conjugation_bijection}
    Let $\lambda$ be a partition of $n$ and $\mu$ be a composition of $n$. Define $\tau:\MRT_{\lambda,\mu}\to \THC_{\lambda',\mu}$ by letting $\tau(T)$ be the transpose of a monotonic ribbon tiling $T$. Then $\tau$ is a bijection and we have
    \[\sign(T) \cdot \sign(\tau(T))=(-1)^{n-\ell(\mu)}\]
    for all $T \in \MRT_{\lambda,\mu}$.
\end{lemma}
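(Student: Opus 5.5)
The plan is to check directly that transposition carries the defining conditions of a monotonic ribbon tiling to those of a tunnel hook covering, and then to track the steps of each path.

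Fix coordinates: the cell in row $i$ and column $j$ of $\lambda$ goes to the cell in row $j$ and column $i$ of $\lambda'$. Under this map, a step "up" (from row $i$ to row $i-1$ in the same column) becomes a step "left" (from column $i$ to column $i-1$ in the same row). Likewise, a step "right" becomes a step "down".

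Consider a ribbon that starts on the lower boundary of $\lambda$ and moves up and to the right. Its transpose starts on the right boundary of $\lambda'$, since the bottom cell of a column of $\lambda$ is the rightmost cell of the corresponding row of $\lambda'$. The transposed path then moves left and down. Reading that path in reverse would reverse the direction, so I first need to fix an orientation convention. Tunnel hooks are drawn starting at the right boundary and progressing down and to the left. Taken as an unordered set of steps, the transposed path uses exactly left and down steps, beginning at the transposed start cell on the right boundary. So the transposed ribbon is a lattice path starting on the right boundary and using only down and left steps. This is precisely a tunnel hook, and $\tau(T)$ is a tunnel hook covering of $\lambda'$.

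The same argument applied to the transpose shows that $\tau$ has an inverse, so $\tau$ is a bijection. For the type, ribbons ordered by the column of their starting cell, read left to right, correspond to tunnel hooks ordered by the row of their starting cell, read top to bottom. Sizes are preserved, so the type $\mu$ is preserved.

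For the sign, a ribbon of size $\mu_i$ takes $\mu_i - 1$ steps in total. Write $u_i$ for its number of up steps; the remaining $\mu_i - 1 - u_i$ steps are right steps. After transposition, the right steps become the down steps of the corresponding tunnel hook. Hence
\[
\sign(T)\cdot\sign(\tau(T)) = (-1)^{\sum_i u_i + \sum_i(\mu_i-1-u_i)} = (-1)^{n-\ell(\mu)}.
\]

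The main obstacle is bookkeeping with the conventions. I must make sure the starting cell of a ribbon, which lies on the lower boundary, really becomes the starting cell of a hook, which lies on the right boundary, with the path directions matching as the definitions require. I must also make sure the orderings used to define the two types correspond. I would settle both points by working through the example in Figure~\ref{fig:t-m}, which has type $(8,5,8,1,4,1)$ on both sides.
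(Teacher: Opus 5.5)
Your proof is correct and is the paper's argument written out in full: transposition sends up/right steps to left/down steps, lower-boundary starts to right-boundary starts, and left-to-right column order to top-to-bottom row order, so the $\sum_i(\mu_i-1)=n-\ell(\mu)$ steps split between the two signs. No orientation reversal is needed, since transposing cell by cell preserves the order of steps. If you check this against Figure~\ref{fig:t-m}, note that the tunnel hook covering there has $3+1+5+0+2+0=11$ down steps, so its sign is $-1$; the stated value ``$+1$'' is an arithmetic slip, and $-1$ is consistent with $n-\ell(\mu)=27-6=21$ being odd.
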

\begin{proof}
    It is immediate from Definitions~\ref{def:mrt} and \ref{def:thc} that $\tau$ is a bijection. Since transposing a lattice path interchanges vertical and horizontal steps, the effect on signs is as described.
\end{proof}

With Lemma~\ref{lem:conjugation_bijection} in hand, the Schur expansion of $f_\mu$ follows quickly from the work of Allen and Mason \cite{allen_combinatorial_2025}.  Given a (strong) composition $\alpha$ of $n$, let $\sort(\alpha) \vdash n$ be the partition obtained by writing the parts of $\alpha$ in weakly decreasing order.


\begin{theorem}
    \label{thm:path-power-to-schur}
    For any partition $\mu \vdash n$ we have 
    \[
    (-1)^{n - \ell(\mu)}  \cdot f_\mu = \sum_{\lambda \vdash n} \left[ 
    \sum_{\sort(\alpha) = \mu}
    \sum_{T \in \MRT_{\lambda,\alpha}} \sign(T) \right] \cdot s_\lambda,
    \]
    where the second sum is over all strong compositions $\alpha$ with $\sort(\alpha) = \mu$.
\end{theorem}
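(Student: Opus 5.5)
The plan is to apply $\omega$ and reduce to the Allen--Mason Schur expansion of the monomial symmetric functions. Since $f_\mu = \omega(m_\mu)$ and $\omega(s_\lambda) = s_{\lambda'}$, it suffices to know the coefficient of $s_\nu$ in $m_\mu$ for every $\nu \vdash n$. The main theorem of Allen and Mason \cite{allen_combinatorial_2025}, specialized to partition shapes, gives the entries of the inverse Kostka matrix via tunnel hook coverings:
\[
m_\mu = \sum_{\nu \vdash n} \left[ \sum_{\sort(\alpha) = \mu} \sum_{T \in \THC_{\nu,\alpha}} \sign(T) \right] s_\nu .
\]
I will cite this in exactly this form, summing over all compositions $\alpha$ rearranging $\mu$. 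The main point requiring care is checking that their indexing and sign conventions match Definition~\ref{def:thc}. In particular, I need to confirm the following.
\begin{enumerate}
\item The type of a covering is read from top to bottom in the starting rows.
\item The sign counts down steps.
\item Their tilings of composition diagrams restrict correctly to partition shapes.
\end{enumerate}
If their formula is stated as $K^{-1}_{\mu,\nu}$ with a fixed ordering of parts rather than a sum over rearrangements, I will first sum over the compositions with $\sort(\alpha)=\mu$ to recover $m_\mu$.

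Applying $\omega$ to this expansion gives
\[
f_\mu = \sum_{\nu} \left[ \sum_{\sort(\alpha)=\mu}\sum_{T\in\THC_{\nu,\alpha}}\sign(T)\right] s_{\nu'} .
\]
Next I reindex by $\lambda = \nu'$, so that the inner sums run over $\THC_{\lambda',\alpha}$.

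For each $\alpha$, Lemma~\ref{lem:conjugation_bijection} provides a bijection $\tau: \MRT_{\lambda,\alpha} \to \THC_{\lambda',\alpha}$ satisfying $\sign(\tau(T)) = (-1)^{n-\ell(\alpha)}\sign(T)$. Because $\ell(\alpha) = \ell(\mu)$ whenever $\sort(\alpha)=\mu$, this sign factor is the same for every $\alpha$ in the sum. Transporting each inner sum along $\tau$ therefore gives
\[
f_\mu = (-1)^{n-\ell(\mu)} \sum_{\lambda}\left[\sum_{\sort(\alpha)=\mu}\sum_{T\in\MRT_{\lambda,\alpha}}\sign(T)\right]s_\lambda .
\]
Multiplying both sides by $(-1)^{n-\ell(\mu)}$ yields the statement of Theorem~\ref{thm:path-power-to-schur}.

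The only real obstacle is the first step: matching the Allen--Mason conventions precisely, namely the orientation of the tunnel hooks, the order in which the type composition is read, and whether their sign is attached to down steps or to some other statistic. If their statistic differs, for example if it counts horizontal steps, I will convert it using the same step-count identity as in Lemma~\ref{lem:conjugation_bijection}. A hook of size $k$ has $k-1$ steps in total, so its horizontal and vertical step counts determine each other, and the conversion changes the sign only by a global factor $(-1)^{n-\ell(\mu)}$.
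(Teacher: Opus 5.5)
Your proposal is correct and is essentially the paper's proof. You apply $\omega$ to the Allen--Mason formula $K^{-1}_{\mu,\lambda'}=\sum_{\sort(\alpha)=\mu}\sum_{T\in\THC_{\lambda',\alpha}}\sign(T)$, which the paper cites in exactly your rearrangement-summed, down-step-sign form, and then transport each inner sum along the transpose bijection of Lemma~\ref{lem:conjugation_bijection}, whose sign factor $(-1)^{n-\ell(\alpha)}=(-1)^{n-\ell(\mu)}$ is the same for every rearrangement $\alpha$ of $\mu$; your convention worries are settled by the case $\mu=(n)$, where $(-1)^{n-1}f_{(n)}=p_n=\sum_k(-1)^k s_{(n-k,1^k)}$ matches the single hook ribbon with $k$ up steps.
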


\begin{proof}
    The {\em Kostka matrix} $K = (K_{\lambda,\mu})$ is the transition matrix from the Schur basis to the monomial basis, i.e.
    \begin{equation}
        s_\lambda = \sum_{\mu \vdash n} K_{\lambda,\mu} \cdot m_\mu \quad \quad \text{for all } \lambda \vdash n.
    \end{equation}
    The inverse Kostka matrix $K^{-1} = (K^{-1}_{\mu,\lambda})$ therefore satisfies
    \begin{equation}
        m_\mu = \sum_{\lambda \vdash n} K^{-1}_{\mu,\lambda} \cdot s_\lambda \quad \quad \text{for all $\mu \vdash n$.}
    \end{equation}
    Since $\omega(m_\mu) = f_\mu$ and $\omega(s_\lambda) = s_{\lambda'}$, applying $\omega$ to both sides of this equation gives
    \begin{equation}
    \label{eq:f-inverse-kostka}
        f_\mu = \sum_{\lambda \vdash n} K^{-1}_{\mu,\lambda'} \cdot s_\lambda.
    \end{equation}
    Allen and Mason \cite{allen_combinatorial_2025} gave the following combinatorial interpretation of the inverse Kostka matrix entry $K^{-1}_{\mu,\lambda'}$:
    \begin{equation}
    \label{eq:am-thc}
        K^{-1}_{\mu,\lambda'} = \sum_{\sort(\alpha) = \mu} \sum_{T \in \THC_{\lambda',\alpha}} \sign(T).
    \end{equation}
    Equation~\eqref{eq:am-thc} may also be derived from earlier work of E\u{g}ecio\u{g}lu and Remmel \cite{egecioglu_combinatorial_1990}.
    Combining Equations~\eqref{eq:f-inverse-kostka} and \eqref{eq:am-thc} with Lemma~\ref{lem:conjugation_bijection} completes the proof.
\end{proof}

For a partition $\mu \vdash n$, Hamaker and Rhoades defined \cite[Def. 4.6]{HR} the ``path power sum''  $\vec{p}_\mu$ via the Frobenius isomorphism $\ch$.  In particular, let $g \in \PPPP_n(I,J)$ have path type $\mu$ and empty cycle type. The partial permutation $g$ gives rise to a map ${\bf 1}_g: \symm_n \to \CC$ given by
\begin{equation}
    {\bf 1}_g(w) = \begin{cases}
        1 & \text{if $w(i) = g(i)$ for all $i \in I,$}\\
        0 & \text{otherwise.}
    \end{cases}
\end{equation}
Conjugation gives a class function $\psi_\mu: \symm_n \to \CC$ with formula
\begin{equation}
  \psi_\mu(w) := \sum_{u \in \symm_n} {\bf 1}_g(uwu^{-1}).  
\end{equation}
It is not hard to see that  $\psi_\mu$ depends only on $\mu$ and is otherwise independent of $g$. The {\em path power sum} symmetric function as defined in \cite{HR} is the image
\begin{equation}
    \vec{p}_\mu := \ch(\psi_\mu) \in \Lambda_n
\end{equation}
of $\psi_\mu$ under the Frobenius isomorphism. Combining \cite[Thm. 5.13]{HR} and Theorem~\ref{thm:path-power-to-schur} shows
\begin{equation}
\label{eq:pp-to-f}
    \vec{p}_\mu = (-1)^{n - \ell(\mu)} m(\mu)! \cdot f_\mu
\end{equation}
so that the path power sum as defined in \cite{HR} is a rescaled forgotten symmetric function.  
Equation~\eqref{eq:pp-to-f} can also be derived by comparing Doubilet's Theorem~\ref{thm:doubilet} with \cite[Prop. 4.9]{HR}.

Let $g \in \PPPP_n$ have path type $\mu$ and cycle type $\nu$. Combining \eqref{eq:pp-to-f} with Theorem~\ref{thm:forgotten} gives
\begin{equation}
\label{eq:cycle-path}
    \chi(a_g) = \langle \ch(\chi), \vec{p}_\mu \cdot p_\nu \rangle \quad \quad \text{for any class function } \chi: \symm_n\to \CC.
\end{equation}
Equation~\eqref{eq:cycle-path} is equivalent to \cite[Prop. 4.7]{HR}.

Given a skew shape $\lambda/\mu$, a {\em rim hook tableau} $T$ of skew shape $\lambda/\mu$ is a sequence
\[
T = ( \mu = \lambda^0 \subseteq \lambda^1 \subseteq \cdots \subseteq \lambda^m  = \lambda)
\]
such that $\xi^i := \lambda^i / \lambda^{i-1}$ is a ribbon for all $i =1,\dots,m$. The {\em sign} of $T$ is 
\begin{equation}
    \sign(T) := \prod_{i=1}^m \sign(\xi^i),
\end{equation}
where $\sign(\xi^i)$ is $(-1)^{r-1}$ when $\xi^i$ occupies $r$ rows. The {\em type} of $T$ is the sequence $( |\xi^1|, |\xi^2|, \dots , |\xi^m|)$ of ribbon sizes. The {\em Murnaghan--Nakayama rule} states  
\begin{equation}
    \label{eq:mn-rule}
    s_\mu \cdot p_\nu = \sum_\lambda  \left[  \sum_{T \in \RHT_{\lambda/\mu,\nu}} \sign(T) \right] \cdot s_\lambda,
\end{equation}
where the inner sum is over the set 
\begin{equation}
    \RHT_{\lambda/\mu,\nu} := \{ \text{all rim hook tableaux of shape $\lambda/\mu$ and type $\nu$} \}.
\end{equation}
Theorem~\ref{thm:forgotten} and Theorem~\ref{thm:path-power-to-schur} combine with the Murnaghan--Nakayama rule to give a combinatorial expression for $\chi^\lambda(a_g)$ for any $\lambda \vdash n$ and any $g \in \PPPP_n$.


\begin{corollary}
    \label{cor:partial-permutation-character}
    Let $g \in \PPPP_n$ have path type $\mu$ and cycle type $\nu$.  For $\lambda \vdash n$ we have
    \begin{align}
        \chi^\lambda(a_g) &= (-1)^{|\mu| - \ell(\mu)} \cdot m(\mu)! \cdot \langle s_\lambda, f_\mu \cdot p_\nu \rangle \\
        &= \langle s_\lambda, \vec{p}_\mu \cdot p_\nu \rangle \\
        &= 
        m(\mu)! \cdot \sum_{\rho \subseteq \lambda} \left[  \sum_{\substack{\sort(\alpha) = \mu \\ T_1 \in \MRT_{\rho,\alpha}}} \sign(T_1) \right] \cdot \left[ \sum_{T_2 \in \RHT_{\lambda/\rho,\nu}} \sign(T_2) \right].
    \end{align}
\end{corollary}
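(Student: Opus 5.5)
The three equalities can be proved in order, each resting on results already established. The first is Theorem~\ref{thm:forgotten} with $\chi = \chi^\lambda$. Since $\ch(\chi^\lambda) = s_\lambda$ by the standard property of the Frobenius characteristic, this gives
\[
\chi^\lambda(a_g) = (-1)^{|\mu|-\ell(\mu)} m(\mu)! \cdot \langle s_\lambda, f_\mu \cdot p_\nu\rangle .
\]
The second equality follows by substituting \eqref{eq:pp-to-f}, with $n$ replaced by $|\mu|$ because $\vec{p}_\mu \in \Lambda_{|\mu|}$. This replaces $(-1)^{|\mu|-\ell(\mu)} m(\mu)! \, f_\mu$ by $\vec{p}_\mu$; equivalently, it is just \eqref{eq:cycle-path} with $\chi = \chi^\lambda$.

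For the third equality, apply Theorem~\ref{thm:path-power-to-schur} with $n$ replaced by $|\mu|$. Multiplying through by $m(\mu)!$ gives
\[
(-1)^{|\mu|-\ell(\mu)} m(\mu)! \, f_\mu = m(\mu)! \sum_{\rho \vdash |\mu|} c_\rho \, s_\rho,
\qquad
c_\rho := \sum_{\sort(\alpha)=\mu} \; \sum_{T_1 \in \MRT_{\rho,\alpha}} \sign(T_1).
\]
Multiply by $p_\nu$ and expand each product $s_\rho p_\nu$ with the Murnaghan--Nakayama rule \eqref{eq:mn-rule}. This yields
\[
\sum_{\lambda} \Bigl[ \sum_{T_2 \in \RHT_{\lambda/\rho,\nu}} \sign(T_2) \Bigr] s_\lambda .
\]
Pairing with $s_\lambda$ and using the orthonormality of the Schur basis under the Hall inner product picks out the coefficient of $s_\lambda$. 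The result is $m(\mu)! \sum_\rho c_\rho \sum_{T_2} \sign(T_2)$.

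Two bookkeeping points remain. First, the set $\RHT_{\lambda/\rho,\nu}$ is empty unless $\rho \subseteq \lambda$, since a rim hook tableau of skew shape $\lambda/\rho$ requires this containment. So the sum over $\rho \vdash |\mu|$ may be rewritten as a sum over $\rho \subseteq \lambda$, and the $\rho$ with $|\rho| \neq |\mu|$ contribute nothing because their $\MRT$ sets are empty. Second, the sign conventions must agree. Theorem~\ref{thm:path-power-to-schur} absorbs the factor $(-1)^{|\mu|-\ell(\mu)}$ on its left side, so no stray sign survives, and the final formula carries only the factor $m(\mu)!$, as claimed.

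No step presents a real obstacle. The only care needed is in the degree bookkeeping: Theorem~\ref{thm:path-power-to-schur} and \eqref{eq:pp-to-f} must be applied in degree $|\mu|$ rather than $n$, since $|\mu| + |\nu| = n$.
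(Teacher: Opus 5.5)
Your proposal is correct and follows essentially the same route as the paper: Theorem~\ref{thm:forgotten} with $\ch(\chi^\lambda)=s_\lambda$, then Equation~\eqref{eq:pp-to-f}, then Theorem~\ref{thm:path-power-to-schur} combined with the Murnaghan--Nakayama rule and orthonormality of the Schur basis. Your bookkeeping remarks make explicit what the paper leaves implicit. These are that the results are applied in degree $|\mu|$ rather than $n$, and that the sum over $\rho \vdash |\mu|$ reduces to $\rho \subseteq \lambda$.
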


\begin{proof}
    The first equality is a restatement of Theorem~\ref{thm:forgotten} and the second equality follows immediately from Equation~\eqref{eq:pp-to-f}. For the third equality, recall that the Schur basis is orthonormal and apply Theorem~\ref{thm:path-power-to-schur} together with the Murnaghan--Nakayama rule \eqref{eq:mn-rule}.
\end{proof}

\section{The group algebra elements $\{I,J\}$}
\label{sec:Elements}

Let $a \in \CC[\symm_n]$ be a group algebra element and suppose we have an  expression
\begin{equation}
\label{eq:a-to-ag}
    a = \sum_{g \in \PPPP_n} c_g \cdot a_g \quad \quad (c_g \in \CC)
\end{equation}
of $a$ as a linear combination of the elements $a_g$ for partial permutations $g \in \PPPP_n$. Theorem~\ref{thm:forgotten} and Corollary~\ref{cor:partial-permutation-character} give a method for computing the irreducible character evaluation $\chi^\lambda(a)$ for $\lambda \vdash n$. In this final section, we carry out this method for a specific family of group algebra elements.

Let $I, J \subseteq [n]$ be subsets with $|I| = |J|$. Define the group algebra element $\{I,J\} \in \CC[\symm_n]$ by
\begin{equation}
    \{I,J\} = \sum_{\substack{w \in \symm_n \\ w(I) = J}} w \in \CC[\symm_n].
\end{equation}
That is, the element $\{I,J\}$ is the sum over all permutations which map the elements $I$ to those of $J$ in some order. In terms of partial permutations, we have
\begin{equation}
    \label{eq:set-to-ag}
    \{I,J\} = \sum_{g \in \PPPP_n(I,J)} a_g.
\end{equation}

We give a formula for $\chi^\lambda \{I,J\} := \chi^\lambda(\{I,J\})$ for any $\lambda \vdash n$ and any $I,J \subseteq [n]$ with $|I| = |J|$. This evaluation depends on the partition $\lambda \vdash n$ and the size of the intersection $I \cap J$. Our starting point is as follows.
A partial permutation $k$ is {\em cycle-free} if it does not contain any cycles. Let $\type(k)$ be the path type of a cycle-free partial permutation $k$.

\begin{lemma}
    \label{lem:cycle-free}
    Let $I,J \subseteq [n]$ satisfy $|I| = |J|$ and let $\chi: \symm_n \to \CC$ be a class function. We have
    \[
    \chi \{I,J\} = \sum_{T \subseteq I \cap J} |T|! \cdot  \sum_{\substack{k \in \PPPP_{[n] - T}(I-T,J-T) \\ k \text{ cycle-free}}}   \langle \ch(\chi), \vec{p}_{\type(k)} \cdot h_{|T|} \rangle
    \]
    where the inner sum is over all cycle-free partial permutations $k: (I-T) \xrightarrow{\, \sim \,} (J - T)$ of the set $[n]-T$. In particular, the index $\type(k)$ of the path power sum is a partition of $n -|T|$.
\end{lemma}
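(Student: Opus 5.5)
Start from Equation~\eqref{eq:set-to-ag}, which writes $\{I,J\}=\sum_{g\in\PPPP_n(I,J)} a_g$. Then group the partial permutations $g$ according to the union $T$ of the vertex sets of their cycles. Every cycle of $g$ lies inside $I\cap J$, because each vertex of a cycle has both an out-edge and an in-edge. So $T\subseteq I\cap J$, and $g$ restricted to $T$ is a genuine permutation $u\in\symm_T$ all of whose cycles are cycles of $g$. Restricting $g$ to $I-T$ gives a partial permutation $k:(I-T)\to(J-T)$ of the ambient set $[n]-T$, and $k$ is cycle-free. Conversely, any pair $(u,k)$ with $u\in\symm_T$ and $k\in\PPPP_{[n]-T}(I-T,J-T)$ cycle-free glues to a unique $g\in\PPPP_n(I,J)$ with cycle set exactly $T$. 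This gives a bijection
\[
\PPPP_n(I,J)\;\longleftrightarrow\;\bigsqcup_{T\subseteq I\cap J}\symm_T\times\{k\in\PPPP_{[n]-T}(I-T,J-T)\,:\,k\text{ cycle-free}\}.
\]
Under this bijection, $g$ has cycle type $\cyc(u)\vdash|T|$, and its path type is $\type(k)\vdash n-|T|$. The paths of $g$ are exactly the paths of $k$, including isolated vertices of $[n]-T$, since no vertex outside $T$ lies on a cycle.

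Next apply Equation~\eqref{eq:cycle-path} to each summand:
\[
\chi(a_g)=\langle\ch(\chi),\vec p_{\type(k)}\cdot p_{\cyc(u)}\rangle .
\]
Fix $T$ and $k$ and sum over $u\in\symm_T$. By linearity, this replaces $p_{\cyc(u)}$ with $\sum_{u\in\symm_T}p_{\cyc(u)}$. This sum equals $|T|!\,h_{|T|}$, by the standard identity $h_m=\frac1{m!}\sum_{u\in\symm_m}p_{\cyc(u)}$, which is the Frobenius characteristic of the trivial character. Summing over $k$ and then over $T$ gives exactly the stated formula.

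The only step needing care is the bijection. One must check that isolated points, and path endpoints lying in $(I\cap J)-T$, are accounted for correctly. A vertex in $I\cap J$ that lies on a path of $g$ remains in the ambient set $[n]-T$ and lies in both $I-T$ and $J-T$. One must also check that gluing never accidentally creates a new cycle. It cannot, because $k$ and $u$ live on disjoint vertex sets and $k$ is cycle-free. Beyond this bookkeeping, the argument is a direct combination of Equation~\eqref{eq:set-to-ag}, Equation~\eqref{eq:cycle-path}, and the power-sum expansion of $h_m$.
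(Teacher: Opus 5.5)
Your proposal is correct and follows the paper's proof essentially step for step. The paper also groups $g \in \PPPP_n(I,J)$ by the maximal set $T_g \subseteq I \cap J$ on which $g$ restricts to a genuine permutation, which is exactly your union of cycle vertex sets; it then applies Theorem~\ref{thm:forgotten} with \eqref{eq:pp-to-f} and collapses the sum over $\symm_T$ via $\sum_{w \in \symm_T} p_{\cyc(w)} = |T|!\, h_{|T|}$, while your explicit check that restriction and gluing are mutually inverse spells out a step the paper leaves implicit.
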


\begin{proof}
    Theorem~\ref{thm:forgotten}, Equation~\eqref{eq:pp-to-f}, and Equation~\eqref{eq:set-to-ag} imply
    \begin{equation}
        \chi\{I,J\} = \sum_{g \in \PPPP_n(I,J)} \langle \ch(\chi), \vec{p}_{\mu(g)} \cdot p_{\nu(g)} \rangle
    \end{equation}
    where the sum ranges over all partial permutations $g \in \PPPP_n(I,J)$, $\mu(g)$ is the path type of $g$, and $\nu(g)$ is the cycle type of $g$. For any $g \in \PPPP_n(I,J)$, there is a unique maximal subset $T_g \subseteq I \cap J$ such that $g$ restricts to a (full) permutation $w_g$ of $T_g$; we have $\nu(g) = \cyc(w_g)$. On the other hand, the partial permutation $g$ restricts to a cycle-free partial permutation $k_g \in \PPPP_{[n]-T_g}(I-T_g,J-T_g)$ of path type $\type(k_g) =\mu(g)$. We therefore have
    \begin{align}
        \sum_{g \in \PPPP_n(I,J)} \langle \ch(\chi), \vec{p}_{\mu(g)} \cdot p_{\nu(g)} \rangle &= 
        \sum_{T \subseteq I \cap J} \sum_{\substack{g \in \PPPP_n(I,J) \\ T_g = T}} \langle \ch(\chi), \vec{p}_{\type(k_g)} \cdot p_{\cyc(w_g)} \rangle \\
        &= \sum_{T \subseteq I \cap J} |T|! \cdot  \sum_{\substack{k \in \PPPP_{[n]-T}(I-T,J-T) \\ k \text{ cycle-free}}}   \langle \ch(\chi), \vec{p}_{\type(k)} \cdot h_{|T|} \rangle
    \end{align}
    where the second equality used the identity
    \begin{equation}
        \sum_{w \in \symm_T} p_{\cyc(w)} = |T|! \cdot h_{|T|}
    \end{equation}
    and the lemma is proved.
\end{proof}

There is significant cancellation on the right-hand side of Lemma~\ref{lem:cycle-free}. To describe this cancellation, we give the path power sum expansion of the symmetric function $n! \cdot h_n \cdot \vec{p}_\mu$.  For any composition $\mu$, we define
\begin{equation}
    m_\mu := m_{\sort(\mu)}, \quad 
    f_\mu := f_{\sort(\mu)}, \quad \text{and} \quad 
    \vec{p}_\mu := \vec{p}_{\sort(\mu)}.
\end{equation}
The cancellation in Lemma~\ref{lem:cycle-free} arises from the signs in the following lemma.

\begin{lemma}
    \label{lem:h-times-pp-to-pp}
    Let $n \geq 0$ and let $\mu = (\mu_1,\dots,\mu_r)$ be a composition with $r$ parts.  We have 
    \[
    n! \cdot h_n \cdot \vec{p}_{\mu} = 
    \sum_{j=0}^n (-1)^j \cdot  (n)_j \cdot   \sum_{\substack{I \subseteq [r] \\ |I| = j}}  \vec{p}_{(\mu_1 + \delta_{1 \in I}, \dots, \mu_r + \delta_{r \in I}, 1^{n-j})}.
    \]
    Here $(n)_j := n(n-1) \cdots (n-j+1)$ is a falling factorial and for $I \subseteq [r]$ and $1 \leq i \leq r$ we write
    \[
    \delta_{i \in I} := \begin{cases}
        1 & i \in I, \\
        0 & i \notin I.
    \end{cases} 
    \]
\end{lemma}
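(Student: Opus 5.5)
The plan is to prove the identity by pairing both sides with an arbitrary class function $\chi$ on $\symm_{N}$, where $N := |\mu| + n$, and to realize each side as $\chi$ evaluated on a group algebra element built from partial permutations. Equation~\eqref{eq:cycle-path} (with empty cycle type) converts each path power sum into such an evaluation. Since $\ch$ is an isomorphism, it is enough to check that the two group algebra elements agree.

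\textbf{Setup.} Split $[N] = M \sqcup S$ with $|M| = |\mu|$ and $|S| = n$. Fix a cycle-free partial permutation $g_0$ of $[N]$ whose directed paths partition $M$ into paths $C_1,\dots,C_r$ of sizes $\mu_1,\dots,\mu_r$, and whose domain lies inside $M$. Let $t_i$ be the terminal vertex of $C_i$. The points of $S$ are then isolated vertices of $g_0$, and $g_0$ has path type $(\mu,1^n)$.

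\textbf{Left-hand side.} Consider
\[
b := \sum_{\substack{w \in \symm_N,\ w \text{ extends } g_0 \\ w(S) = S}} w .
\]
This $b$ is the product of $a_{g_0|_M} \in \CC[\symm_M]$ with $\sum_{u \in \symm_S} u$ inside $\CC[\symm_M \times \symm_S]$. The Frobenius characteristic of induction from a Young subgroup is multiplication. Combining this with Equation~\eqref{eq:cycle-path} on $\symm_M$ and with $\sum_{u\in \symm_n} p_{\cyc(u)} = n!\, h_n$ gives
\[
\chi(b) = \langle \ch(\chi), \vec{p}_\mu \cdot n!\, h_n \rangle .
\]
A quick way to verify this is to expand $\chi(w) = \langle \ch(\chi), p_{\cyc(w)}\rangle$ and note that $\cyc(w)$ is the union of the cycle types of the two components.

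\textbf{Right-hand side.} For each $I \subseteq [r]$ with $|I| = j$ and each injection $\phi: I \to S$, extend $g_0$ by setting $t_i \mapsto \phi(i)$ for $i \in I$. The result $h_{I,\phi}$ is cycle-free, with path type $(\mu_1+\delta_{1\in I},\dots,\mu_r+\delta_{r\in I},1^{n-j})$. There are $(n)_j$ injections for each $I$. Hence, by Equation~\eqref{eq:cycle-path}, the right-hand side paired with $\ch(\chi)$ equals
\[
\chi\Bigl(\sum_{I,\phi} (-1)^{|I|} a_{h_{I,\phi}}\Bigr).
\]

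\textbf{Key step: an inclusion–exclusion.} I claim that $\sum_{I,\phi} (-1)^{|I|} a_{h_{I,\phi}} = b$. Fix $w$ extending $g_0$ and let $E(w) := \{ i : w(t_i) \in S\}$. Then $w$ appears in $a_{h_{I,\phi}}$ exactly when $I \subseteq E(w)$ and $\phi = w|_{\{t_i : i \in I\}}$. So the coefficient of $w$ is $\sum_{I \subseteq E(w)} (-1)^{|I|}$, which is $1$ if $E(w) = \varnothing$ and $0$ otherwise.

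It remains to see that $E(w) = \varnothing$ is equivalent to $w(S) = S$. Every non-terminal vertex of a path already maps into $M$ under $g_0$. So $E(w) = \varnothing$ means $w(M) \subseteq M$, hence $w(M) = M$, hence $w(S) = S$. The converse is immediate.

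\textbf{Main obstacle.} The step I expect to need the most care is this bookkeeping: checking that the pairs $(I,\phi)$ biject with the indexing data on the right-hand side, including the $(n)_j$ factor and the composition-versus-partition convention $\vec{p}_\alpha = \vec{p}_{\sort(\alpha)}$. A secondary point is justifying the induction-product identity for $\chi(b)$, which could alternatively be done directly from Theorem~\ref{thm:forgotten} by summing over $u \in \symm_S$.
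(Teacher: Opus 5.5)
Your proof is correct, and it takes a genuinely different route from the paper.

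\textbf{The two routes.} The paper stays inside $\Lambda$. It expands $e_n \cdot m_\mu$ in the monomial basis, applies $\omega$ to get $h_n \cdot f_\mu$ in the forgotten basis, and converts to path power sums via \eqref{eq:pp-to-f}. You instead lift the identity to $\CC[\symm_{|\mu|+n}]$. There you prove $\sum_{I,\phi}(-1)^{|I|} a_{h_{I,\phi}} = b$ by inclusion--exclusion on the set $E(w)$ of path ends that $w$ sends into $S$.

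\textbf{What your route buys.} Besides $\sum_{u \in \symm_n} p_{\cyc(u)} = n!\, h_n$, your only symmetric-function input is $\vec{p}_{\type(g)} = \sum_{w \text{ extends } g} p_{\cyc(w)}$ for cycle-free $g$. This is immediate from the definition $\vec{p}_\mu = \ch(\psi_\mu)$, even though you cite it via \eqref{eq:cycle-path}. So your argument does not depend on Doubilet's theorem, \cite{HR}, or Allen--Mason. The factor $(n)_j$ also appears transparently as the number of injections $I \hookrightarrow S$, which closely parallels the splicing construction in the proof of Lemma~\ref{lem:pp-compact}.

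\textbf{What the paper's route buys, and a caution.} The paper's route is shorter once \eqref{eq:pp-to-f} is available, but it has to track the multiplicity factors $m(\cdot)!$. The displayed identity \eqref{eq:e-times-m-to-m} omits them: already $e_1 m_1 = m_2 + 2m_{11}$, while the display gives $m_2 + m_{11}$. The correct input is the identity for augmented monomials $\tilde m_\lambda := m(\lambda)!\, m_\lambda$,
\[
n! \cdot e_n \cdot \tilde m_\mu = \sum_{j=0}^n (n)_j \sum_{\substack{I \subseteq [r] \\ |I| = j}} \tilde m_{(\mu_1+\delta_{1\in I},\dots,\mu_r+\delta_{r\in I},1^{n-j})},
\]
which is exactly the symmetric-function shadow of your injection count. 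Applying $\omega$ and using $\vec{p}_\lambda = (-1)^{|\lambda|-\ell(\lambda)}\,\omega(\tilde m_\lambda)$ then yields the lemma.
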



For example, suppose $n = 3$ and $\mu = (3,3,1,1)$. Lemma~\ref{lem:h-times-pp-to-pp} says that
\begin{multline*}
    3! \cdot h_3 \cdot \vec{p}_{3,3,1,1} = 
    (-1)^3 \cdot (3 \cdot 2 \cdot 1) \cdot ( \vec{p}_{4,4,2,1} + \vec{p}_{4,4,1,2} + \vec{p}_{4,3,2,2} + \vec{p}_{3,4,2,2} ) \\
    + (-1)^2 \cdot (3 \cdot 2) \cdot (\vec{p}_{4,4,1,1,1} + \vec{p}_{4,3,2,1,1} + \vec{p}_{4,3,1,2,1} + \vec{p}_{3,4,2,1,1} + \vec{p}_{3,4,1,2,1} + \vec{p}_{3,3,2,2,1} ) \\ + 
    (-1)^1 \cdot (3) \cdot ( \vec{p}_{4,3,1,1,1,1} + \vec{p}_{3,4,1,1,1,1} + \vec{p}_{3,3,2,1,1,1} + \vec{p}_{3,3,1,2,1,1}) + 
    (-1)^0 \cdot 1 \cdot \vec{p}_{3,3,1,1,1,1,1}.
\end{multline*}

\begin{proof}
    The product $e_n \cdot m_\mu$ of an elementary and monomial symmetric function has the monomial expansion
    \begin{equation}
        \label{eq:e-times-m-to-m}
        e_n \cdot m_\mu = \sum_{j=0}^n \sum_{\substack{I \subseteq [r] \\ |I| = j}} m_{(\mu_1 + \delta_{1 \in I}, \dots, \mu_r + \delta_{r \in I}, 1^{n-j})}.
    \end{equation}
    Applying $\omega$ to both sides of Equation~\eqref{eq:e-times-m-to-m} yields
    \begin{equation}
        \label{eq:h-times-f-to-f}
        h_n \cdot f_\mu = \sum_{j=0}^n \sum_{\substack{I \subseteq [r] \\ |I| = j}} f_{(\mu_1 + \delta_{1 \in I}, \dots, \mu_r + \delta_{r \in I}, 1^{n-j})}.
    \end{equation}
    By Equation~\eqref{eq:pp-to-f}, Equation~\eqref{eq:h-times-f-to-f}  is equivalent to the lemma.
\end{proof}

The right-hand side of Lemma~\ref{lem:cycle-free} may be simplified using the following result whose proof rests on Lemma~\ref{lem:h-times-pp-to-pp} and a sign-reversing involution.

\begin{lemma}
    \label{lem:pp-compact}
    Suppose $I,J \subseteq [n]$ are subsets with $|I| = |J|$. We have the path power sum expansion
    \begin{multline}
        \sum_{T \subseteq I \cap J} |T|!  \cdot \sum_{\substack{k \in \PPPP_{[n]-T}(I-T,J-T) \\ \text{$k$ cycle-free}}} \vec{p}_{\type(k)} \cdot h_{|T|}
        \\= \sum_{t=0}^{|I \cap J|} (-1)^t \cdot {|I \cap J| \choose t} \cdot (n- |I \cup J|)_t  \cdot \vec{p}_{2^{|I-J|+t}, 1^{n-2 \cdot (|I-J|+t)}}.
    \end{multline}
\end{lemma}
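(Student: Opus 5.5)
Write $a := |I \cap J|$, $b := |I - J| = |J - I|$ and $c := n - |I \cup J|$. The plan is to expand every term on the left-hand side with Lemma~\ref{lem:h-times-pp-to-pp}, which produces a signed sum of path power sums indexed by explicit combinatorial objects. A sign-reversing involution should then cancel everything except terms of type $(2^{b+t},1^{n-2(b+t)})$. The count of the surviving objects is the step I have not yet checked.

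First I would record the structure of a cycle-free $k \in \PPPP_{[n]-T}(I-T,J-T)$. Each vertex of $(I\cap J)-T$ has both an in-edge and an out-edge, so it is an interior vertex of a path. Each vertex of $I-J$ has only an out-edge, so it starts a path. Each vertex of $J-I$ has only an in-edge, so it ends a path. The $c$ vertices outside $I \cup J$ are isolated singletons. Hence $k$ consists of $b$ paths of size $\geq 2$, each running from $I-J$ to $J-I$ through interior vertices of $(I\cap J)-T$, together with $c$ singletons. Thus $\type(k)=\sort(\ell_1,\dots,\ell_b,1^c)$, where the $\ell_i$ are the path sizes.

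Next, for fixed $T$ with $|T|=s$, I would apply Lemma~\ref{lem:h-times-pp-to-pp} to $s!\,h_s\cdot \vec{p}_{\type(k)}$. This expresses it as a sum over data $(S,\phi)$ weighted by $(-1)^{|S|}\,\vec{p}_{\type}$. Here $S$ is a set of components of $k$ (paths or singletons), and $\phi : S \hookrightarrow T$ is an injection; there are $(s)_{|S|}$ such injections for each $S$. The associated type increases by one the size of each chosen component and adds $1^{s-|S|}$. I would picture $\phi$ as appending the element $\phi(C)$ of $T$ to the end of the component $C$. In this way the whole left-hand side becomes a signed sum over triples $(T,k,\phi)$, each weighted by $(-1)^{|S|}$ times the path power sum of its augmented type.

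The heart of the argument is an involution $\iota$ on these triples that preserves the augmented type and reverses the sign. For a triple, look at the smallest element $x\in I\cap J$ lying in one of two positions. Either $x$ is the penultimate vertex of an unaugmented path $\cdots\to x\to z$ of $k$, or $x$ is the element $\phi$-appended to a path ending $\cdots\to z$ with $z \in J-I$. The map $\iota$ toggles between these two positions: it deletes $x$ from the path (rerouting $y\to x\to z$ to $y\to z$), moves $x$ into $T$, and appends it to that path; or it does the reverse. This changes $|S|$ by one and keeps every component size unchanged.

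The main obstacle is to define the choice rule so that $\iota$ is a genuine involution. In particular, one must make sure the toggled $x$ is still the minimal candidate after toggling, and that augmented paths with interior vertices are handled consistently. The minimum-element rule is the first thing I would try. After that I would identify the fixed points. I expect they are the triples in which all $b$ long paths have size exactly $2$ and are unaugmented, so that $I\cap J\subseteq T$, and in which exactly $t$ of the $c$ singletons are augmented by elements of $T$. For fixed $t$, such configurations would contribute sign $(-1)^t$ and type $(2^{b+t},1^{n-2(b+t)})$. I would then verify that their total weight is $\binom{a}{t}(c)_t$, as the statement requires. I have not yet reconciled the $b!$ matchings $I-J \to J-I$ with the stated coefficient, so this count is the first thing I would check carefully.
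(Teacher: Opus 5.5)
The step you left open cannot be closed, because the $|I-J|!$ you noticed is real and the statement as printed is off by exactly that factor. Your involution is sound. If $x$ is the minimal candidate and you toggle it, the candidate set is unchanged. The moved element stays a candidate in its other position. The new penultimate vertex of the shortened path is not a candidate, because that path is now augmented, and the reverse direction is symmetric. So the minimum-element rule gives a sign-reversing, type-preserving involution. Its fixed points are the ones you predicted. Every path from $I-J$ to $J-I$ is an unaugmented edge $i\to j$. This forces $T=I\cap J$, since any element of $(I\cap J)-T$ would have to be an interior vertex. Finally, $t$ elements of $T$ are appended to distinct vertices outside $I\cup J$. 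But these fixed points number $|I-J|!\binom{|I\cap J|}{t}(n-|I\cup J|)_t$, where $|I-J|!$ counts the bijections $I-J\to J-I$. So your argument proves the identity with an extra factor $|I-J|!$ on the right, and the lemma as stated is false whenever $|I-J|\ge 2$. Take $n=4$, $I=\{1,2\}$, $J=\{3,4\}$. The left side is $2\vec{p}_{22}$, since there are two cycle-free $k$ and both have path type $(2,2)$. The right side is $\vec{p}_{22}$, and $\langle h_4,\vec{p}_{22}\rangle=2\neq 0$. No choice of involution can fix this.

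Otherwise your route is the paper's: expand with Lemma~\ref{lem:h-times-pp-to-pp}, then cancel by toggling an element of $I\cap J$ in and out of $T$. The paper encodes augmentation by splicing $t$ immediately after the initial vertex $i\in I-J$. On the long path with smallest initial vertex, it toggles the second vertex $i_1$. The graph $g$ is then literally unchanged and only $T$ moves, which makes the involution property immediate; your end-of-path encoding instead requires physically moving $x$. The paper's final count has the same omission as the statement: it counts only the $(n-|I\cup J|)_{|T|}$ attachments and drops the $|I-J|!$ choices of size-$2$ paths $i\to j$. Its description of the fixed points is also inaccurate: it claims $\sign_T(g)=(-1)^{|T|}$ with every element of $T$ attached. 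Your description is the correct one: $T=I\cap J$, and the binomial chooses which elements of $T$ get attached. It gives the same numbers apart from $|I-J|!$. The missing factor carries over to Theorem~\ref{thm:set-irreducible-character}. For example, $\chi^{(4)}\{\{1,2\},\{3,4\}\}=4$ (four permutations send $\{1,2\}$ to $\{3,4\}$), but that formula returns $2$.
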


The left-hand side of Lemma~\ref{lem:pp-compact} involves path power sums $\vec{p}_{\type(k)}$ for arbitrary cycle-free partial permutations $k: (I-T) \to (J-T)$. The indexing partitions $\type(k)$ of these path power sums can have parts of any positive integer size $\leq n$,
but the right-hand side of Lemma~\ref{lem:pp-compact} involves only partitions with part sizes $\leq 2$.

\begin{proof}
    We use Lemma~\ref{lem:h-times-pp-to-pp} to give a combinatorial interpretation of the $\vec{p}$-expansion of the left-hand side. Given  $T \subseteq I \cap J$ and a cycle-free partial permutation $g \in \PPPP_n$, we say that $g$ is {\em $T$-valid} if every path of $g$ is of one of the following five forms:
    \begin{enumerate}
        \item isolated vertices $t$ for $t \in T$,
        \item isolated vertices $a$ for $a \in [n] - (I \cup J)$,
        \item paths of size 2 of the form $a \to t$ for some $a \in [n] - (I \cup J)$ and $t \in T$,
        \item paths of the form $i \to t \to i_1 \to \cdots \to i_s \to j$ where $i \in I - J$, $t \in T$, $i_1, \dots, i_s \in (I \cap J) - T$, and $j \in J - I$, and
        \item paths of the form $i \to i_1 \to \cdots \to i_s \to j$ where
        $i \in I - J$, $i_1, \dots, i_s \in (I \cap J) - T$, and $j \in J - I$.
    \end{enumerate}
    Observe that $T$-validity depends on the sets $I$ and $J$.
    The {\em $T$-sign} of a $T$-valid partial permutation $g$ is
    \begin{equation}
        \sign_T(g) := (-1)^{| \{t \in T \,:\, \text{$t$ occurs in a path of type (3) or (4)}\}|}.
    \end{equation}
    This sign depends on both $g$ and $T$.
    For example, suppose $I = \{1,2,3,4,5\}, J = \{3,4,5,6,7\}$, and $n = 9$. For $T = \{3,5\}$, a possible $T$-valid partial permutation is displayed as follows with the elements of $T$ shown in bold:
    $$
        1 \to {\bf 5} \to 4 \to 7 \quad \quad  2 \to 6 \quad \quad 8  \quad \quad 9 \to {\bf 3}.
    $$
    This partial permutation has $T$-sign $(-1)^2 = +1$. Another possible $T$-valid partial permutation is: 
    $$
    1 \to 4 \to 7 \quad \quad 2 \to {\bf 5} \to 6 \quad \quad {\bf 3} \quad \quad 8 \quad \quad 9
    $$
    This second partial permutation has $T$-sign $(-1)^1 = -1$. 
    Lemma~\ref{lem:h-times-pp-to-pp} allows us to rewrite the left-hand side of the current lemma as
    \begin{equation}
    \label{eq:pp-cancel-one}
        \sum_{T \subseteq I \cap J} |T|!  \cdot \sum_{\substack{k \in \PPPP_{[n]-T}(I-T,J-T) \\ \text{$k$ cycle-free}}} \vec{p}_{\type(k)} \cdot h_{|T|}
        \\= \sum_{T \subseteq I \cap J}   \sum_{\substack{g \in \PPPP_n \text{ is cycle-free} \\ \text{and $T$-valid}}} \sign_T(g) \cdot \vec{p}_{\type(g)}.
    \end{equation}
    Indeed, given a cycle-free partial permutation $k \in \PPPP_{[n]-T}(I-T,J-T)$, we complete $k$ to a $T$-valid cycle-free partial permutation  $g \in \PPPP_n$ by performing the following operations in all possible ways.
    \begin{itemize}
        \item We leave elements $t \in T$ as isolated vertices.
        \item We add elements $t \in T$ to isolated vertices $a \in [n] - (I \cup J)$ in $k$, yielding paths $a \to t$ of size 2.
        \item We splice elements $t \in T$ just after the start of paths of the form 
        \[
        i \to i_1 \to \cdots \to i_s \to j
        \]
        for $i \in I-J$, $j \in J-I$, and $i_1, \dots, i_s \in (I \cap J)-T$, yielding
        \[
        i \to t \to i_1 \to \cdots \to i_s \to j.
        \]
    \end{itemize}
    This precisely emulates the formula in Lemma~\ref{lem:h-times-pp-to-pp}.
    
    
    We perform a sign-reversing involution on the right-hand side of \eqref{eq:pp-cancel-one}. Let $\SSS$ be the set of pairs 
    \begin{equation}
        \SSS := 
        \left\{ (g,T) \,:\, \begin{array}{c}
            T \subseteq I \cap J \text{ and } \\
            g \in \PPPP_n \text{ is cycle-free and $T$-valid and} \\
            \text{has at least one path with more than $2$ vertices}
        \end{array}  \right\}.
    \end{equation}
    We show that the net contribution of $\SSS$ to the right-hand side of \eqref{eq:pp-cancel-one} vanishes. Let $(g,T) \in \SSS$ so that $g$ contains at least one path with more than $2$ vertices.  Let $i \to i_1 \to \cdots \to i_s \to j$ be the unique such path with $i \in I-J$ minimal. We have $s > 0$ and $i_1 \in I \cap J$. Define $T' \subseteq I \cap J$ by 
    \begin{equation}
        T' = \begin{cases}
            T \cup \{i_1\} & i_1 \notin T, \\
            T - \{i_1\} & i_1 \in T.
        \end{cases}
    \end{equation}
    Then $g$ is a $T'$-valid partial permutation with
    \begin{equation}
        \sign_{T'}(g) = -\sign_T(g).
    \end{equation}
    Furthermore, regarding $g$ as a $T'$-valid partial permutation we have $(T')' = T$ so that 
    \begin{equation}
        \label{eq:p-cancel}
        \sum_{(g,T) \in \SSS} \sign_T(g) \cdot \vec{p}_{\type(g)} = 0.
    \end{equation}
    Combining Equations~\eqref{eq:pp-cancel-one} and \eqref{eq:p-cancel} gives
    \begin{equation}
    \label{eq:pp-cancel-two}
        \sum_{T \subseteq I \cap J} |T|!  \cdot \sum_{\substack{k \in \PPPP_{[n]-T}(I-T,J-T) \\ \text{$k$ cycle-free}}} \vec{p}_{\type(k)} \cdot h_{|T|} = 
        \sum_{T \subseteq I \cap J}   \sum_{\substack{g \in \PPPP_n\text{ is cycle-free,} \\ \text{$T$-valid and} \\ \text{has no paths with more than $2$ vertices}}} \sign_T(g) \cdot \vec{p}_{\type(g)}.
    \end{equation}
    We claim that the right-hand side of Equation~\eqref{eq:pp-cancel-two} coincides with the right-hand side of the lemma. Indeed, a $T$-valid partial permutation $g \in \PPPP_n$ with no paths with more than $2$ vertices satisfies \[\sign_T(g) = (-1)^{|T|} \quad \text{and} \quad  \type(g) = (2^{|I-J| + |T|}, 1^{n - 2(|I-J| + |T|)}).\]
    Such a partial permutation can be formed by selecting $|T|$ elements of $[n] - (I \cup J)$ upon which to append an element of $T$ in 
    $(n - |I \cup J|)_{|T|}$ ways.
\end{proof}

Lemmas~\ref{lem:cycle-free} and \ref{lem:pp-compact} imply that for any $I,J \subseteq [n]$ with $|I| = |J|$ and any class function $\chi: \symm_n \to \CC$ we have
\begin{equation}
\label{eq:general-class}
    \chi\{I,J\} = \sum_{t=0}^{|I\cap J|} (-1)^t \cdot {|I \cap J| \choose t} \cdot (n-|I \cup J|)_t \cdot \langle \ch(\chi), \vec{p}_{2^{|I-J|+t}, 1^{n-2(|I-J|+t)}} \rangle.
\end{equation}
Thus, if $\chi = \chi^\lambda$ is an irreducible character of $\symm_n$, evaluating $\chi^\lambda \{I,J\}$ amounts to expanding path power sums of the form $\vec{p}_{2^a 1^b}$ in the Schur basis. Thanks to Theorem~\ref{thm:path-power-to-schur} this is an easy task.

\begin{lemma}
    \label{lem:two-row-pp-to-s}
    For $a,b \geq 0$ with $2a+b=n$, the path power sum $\vec{p}_{2^a1^b}$ has Schur expansion
    \[
    \vec{p}_{2^a 1^b} = a! b! \cdot \sum_{i=0}^a (-1)^i \cdot {a+b-i \choose b} \cdot s_{n-i,i}.
    \]
\end{lemma}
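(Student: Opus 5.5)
Since $\vec{p}_\mu = (-1)^{n-\ell(\mu)} m(\mu)!\, f_\mu$ by \eqref{eq:pp-to-f}, with $\mu = 2^a1^b$ we have $n-\ell(\mu) = a$ and $m(\mu)! = a!\,b!$. The lemma is therefore equivalent to the Schur expansion
\[
f_{2^a1^b} = (-1)^a \sum_{i=0}^a (-1)^i \binom{a+b-i}{b} s_{n-i,i}.
\]
I plan to derive this most quickly from $f_\mu = \omega(m_\mu)$ together with \eqref{eq:f-inverse-kostka}. Since $\omega(s_{n-i,i}) = s_{(n-i,i)'} = s_{2^i1^{n-2i}}$, it is equivalent to prove the monomial-to-Schur expansion
\[
m_{2^a1^b} = (-1)^a \sum_{i=0}^a (-1)^i \binom{a+b-i}{b}\, s_{2^i1^{n-2i}}.
\]
In other words, I need the inverse Kostka entries $K^{-1}_{2^a1^b,\,2^i1^{n-2i}}$.

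Next I will invert the known Kostka matrix on this family, which is triangular and closed under dominance. The Kostka number $K_{2^i1^{n-2i},\,2^a1^b}$ counts SSYT of shape $2^i1^{n-2i}$ and content $2^a1^b$. It is nonzero only for $i \le a$, and I expect it to equal $\binom{a+b-i}{a-i}$ up to a simple check. Only partitions $\le 2^a1^b$ in dominance appear, and among shapes with at most two columns these are exactly the $2^i1^{n-2i}$ with $i\le a$. So I must verify the binomial inversion
\[
\sum_{i=k}^{a} (-1)^{a-i}\binom{a+b-i}{b}\,K_{2^k1^{n-2k},\,2^i1^{n-2i}} = \delta_{a,k},
\]
with $b$ varying as $n-2i$ along the way. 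This is a Vandermonde-type alternating sum, and establishing it is the main obstacle.

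Alternatively, and in the spirit of the paper, I would apply Theorem~\ref{thm:path-power-to-schur} directly. I would count monotonic ribbon tilings of shape $\lambda$ whose type is a rearrangement of $2^a1^b$. Ribbons start on the lower boundary and move up or right, and ribbons of size at most $2$ are single cells, vertical dominoes, or horizontal dominoes. First I would argue that $\lambda$ must have at most two rows. Each column of a tiling is entered only from below or left, so the bottom cell of any column of height $\geq 3$ would force a ribbon of size $\geq 3$. I expect this argument needs care, and if it fails I would fall back on the Kostka inversion above. For $\lambda=(n-i,i)$, each of the $i$ columns of height $2$ must be covered either by one vertical domino (sign $-1$) or by two bottom-starting pieces. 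I would then count these configurations with signs to get $(-1)^i\binom{a+b-i}{b}$, and multiply by $a!\,b!$ via the conversion above.
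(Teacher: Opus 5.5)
Your primary route through the Kostka matrix rests on incorrect facts. Since $K_{\lambda,\mu}\neq 0$ forces $\lambda$ to dominate $\mu$, the number $K_{2^i1^{n-2i},\,2^a1^b}$ vanishes unless $i\ge a$, not $i\le a$. For example, when $i=0$ a single column cannot hold a repeated entry. For $i\ge a$ it equals the ballot number $\binom{b}{i-a}-\binom{b}{i-a-1}$, not $\binom{a+b-i}{a-i}$. Consequently the inversion identity you wrote is false. Because $K_{2^k1^{n-2k},\,2^i1^{n-2i}}=0$ for $i>k$, your sum collapses to its $i=k$ term, which is a nonzero signed binomial coefficient, for every $k<a$. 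What this route actually needs is
\[
\sum_{i=k}^{a}(-1)^{a-i}\binom{n-a-i}{n-2a}\Bigl[\binom{n-2k}{i-k}-\binom{n-2k}{i-k-1}\Bigr]=\delta_{a,k} \qquad (k\le a).
\]
This identity is true, but you have neither stated it correctly nor proved it. So as written, this route has no proof of its only nontrivial step.

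Your fallback is exactly the paper's proof: apply Theorem~\ref{thm:path-power-to-schur} and count tilings by $a$ dominoes and $b$ single cells. Your reduction to at most two rows is sound. A ribbon through the cell two above the bottom of a column of height $\ge 3$ needs at least two up steps, because every column to its left is at least as tall.

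However, your claim that a height-$2$ column may alternatively be covered by ``two bottom-starting pieces'' is false. Excluding it is precisely what makes the signs uniform. The top cell of column $c\le i$ is not on the lower boundary, so it cannot start a ribbon. Entering it from the left would put the top cell of column $c-1$, also not on the lower boundary, into the same ribbon, forcing at least three cells. So every height-$2$ column is a vertical domino. Hence every tiling of shape $(n-i,i)$ has sign exactly $(-1)^i$, and $i\le a$.

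The first-row cells in columns $i+1,\dots,n-i$ all lie on the lower boundary and cannot extend the domino in column $i$. They are therefore tiled by $a-i$ horizontal dominoes and $b$ singletons in $\binom{a+b-i}{b}$ orders. Each such tiling has a unique type $\alpha$ with $\sort(\alpha)=2^a1^b$. Theorem~\ref{thm:path-power-to-schur} then gives $(-1)^a f_{2^a1^b}=\sum_{i=0}^{a}(-1)^i\binom{a+b-i}{b}s_{n-i,i}$. Applying \eqref{eq:pp-to-f} with $(-1)^{n-\ell(\mu)}m(\mu)!=(-1)^a\,a!\,b!$ finishes the proof. With this correction, the fallback is a complete proof.
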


\begin{proof}
    With Theorem~\ref{thm:path-power-to-schur} and Equation~\eqref{eq:pp-to-f} as motivation, we consider the set $\MMM$ of monotonic ribbon tilings consisting of $a$ ribbons of size 2 and $b$ ribbons of size $1$. Any tiling $T \in \MMM$ is of shape $\lambda = (n-i,i)$ for some $0 \leq i \leq a$ and we have $\sign(T) = (-1)^i$. For fixed $i$, a tiling $T \in \MMM$ of shape $(n-i,i)$ will have $a-i$ horizontal ribbons of size 2 and $b$ ribbons of size 1 in the first row; the binomial coefficient ${a+b-i \choose b}$ counts how these ribbons can be ordered.
\end{proof}

We have all of the tools we need to present our formula for the irreducible character values $\chi^\lambda\{I,J\}.$ These character evaluations vanish unless $\lambda$ has $\leq 2$ parts.

\begin{theorem}
    \label{thm:set-irreducible-character}
    Let $I,J \subseteq [n]$ satisfy $|I| = |J|$ and let $\lambda \vdash n$. We have 
    \[
    \chi^\lambda \{I,J\} = 0 \quad \quad \text{if $\lambda$ has $>2$ parts.}
    \]
    If $\lambda = (n-i,i)$ has $\leq 2$ parts we have 
    \begin{equation*}
    \chi^\lambda \{I,J\} =\\ \sum_{t=\max(0,i-|I -J|)}^{|I \cap J|} (-1)^{t+i} \cdot {|I \cap J| \choose t} \cdot (n - |I \cup J|)_t \cdot b_t! \cdot (n-2  b_t)! \cdot {n- b_t - i \choose n - 2 b_t},
    \end{equation*}
    where $b_t := |I-J|+t.$
\end{theorem}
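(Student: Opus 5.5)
The plan is to combine Equation~\eqref{eq:general-class} with Lemma~\ref{lem:two-row-pp-to-s} and then use orthonormality of the Schur basis. Taking $\chi = \chi^\lambda$ in \eqref{eq:general-class}, we have $\ch(\chi^\lambda) = s_\lambda$, so
\[
\chi^\lambda\{I,J\} = \sum_{t=0}^{|I\cap J|} (-1)^t \binom{|I\cap J|}{t} (n-|I\cup J|)_t \cdot \langle s_\lambda, \vec{p}_{2^{b_t} 1^{n-2b_t}} \rangle, \qquad b_t = |I-J|+t.
\]

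Next we apply Lemma~\ref{lem:two-row-pp-to-s} with $a = b_t$ and $b = n - 2b_t$. This expresses $\vec{p}_{2^{b_t}1^{n-2b_t}}$ as a combination of the two-row Schur functions $s_{n-i,i}$ with $0 \le i \le b_t$. Orthonormality then does most of the work.

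\textbf{Vanishing.} If $\lambda$ has more than two parts, every inner product $\langle s_\lambda, \vec{p}_{2^{b_t}1^{n-2b_t}}\rangle$ vanishes, so $\chi^\lambda\{I,J\} = 0$. This gives the first claim.

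\textbf{Two-row case.} If $\lambda = (n-i,i)$, the inner product equals
\[
(-1)^i \, b_t! \,(n-2b_t)! \binom{b_t + (n-2b_t) - i}{n-2b_t} = (-1)^i \, b_t! \,(n-2b_t)! \binom{n-b_t-i}{n-2b_t}
\]
when $i \le b_t$, and it equals $0$ otherwise. Substituting into the displayed sum gives exactly the claimed summand $(-1)^{t+i}\binom{|I\cap J|}{t}(n-|I\cup J|)_t\, b_t!\,(n-2b_t)!\binom{n-b_t-i}{n-2b_t}$.

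\textbf{Summation range.} The condition $i \le b_t = |I-J|+t$ is equivalent to $t \ge i - |I-J|$. The lower limit of summation therefore becomes $\max(0, i-|I-J|)$, while the upper limit $|I\cap J|$ is unchanged.

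\textbf{Boundary care.} There is no real obstacle; the only step needing attention is the edge cases. If $n - 2b_t < 0$, then $(n-|I\cup J|)_t$ should already vanish. Indeed, $n - |I \cup J| = n - |I| - |I-J|$ and $|I| \ge |I-J| + t$, so $n - |I\cup J| < t$ exactly in the degenerate situations. Hence those terms contribute zero, and the expression $(n-2b_t)!$ never needs to be interpreted at a negative argument.
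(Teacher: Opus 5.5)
Your proposal is correct and is essentially the paper's proof: Equation~\eqref{eq:general-class} is exactly the combination of Lemmas~\ref{lem:cycle-free} and \ref{lem:pp-compact} that the paper invokes. From there, Lemma~\ref{lem:two-row-pp-to-s} and orthonormality of the Schur basis give the vanishing statement, the summand, and the lower limit $\max(0,i-|I-J|)$ just as you describe. Your boundary check is a nice addition that the paper leaves implicit: since $|I|\geq |I-J|+t$ gives $n-|I\cup J|\leq n-t-2|I-J|$, the condition $n-2b_t<0$ forces $0\leq n-|I\cup J|<t$ and hence $(n-|I\cup J|)_t=0$. Only this one-way implication is needed and true; the ``exactly'' in your wording overstates it.
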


\begin{proof}
    Lemma~\ref{lem:cycle-free} applied to $\chi = \chi^\lambda$ implies that 
    \begin{align}
        \chi^\lambda \{I,J\} &= \text{coefficient of $s_\lambda$ in } 
        \sum_{T \subseteq I \cap J}  \sum_{\substack{k \in \PPPP_{[n]-T}(I-T,J-T) \\ k \text{ cycle-free}}} |T|! \cdot h_{|T|} \cdot \vec{p}_{\type(k)} \\
        &= \text{coefficient of $s_\lambda$ in } \sum_{t=0}^{|I \cap J|} (-1)^t \cdot {|I \cap J| \choose t} \cdot (n-|I\cup J|)_t \cdot \vec{p}_{2^{b_t},1^{n-2 \cdot b_t}},
    \end{align}
    where the second equality used Lemma~\ref{lem:pp-compact}. Lemma~\ref{lem:two-row-pp-to-s} implies that the above coefficient of $s_\lambda$ coincides with the formula in the proposition.
\end{proof}

\section*{Acknowledgments}

This project was initiated at the 2026 Spring Southeastern AMS meeting in Savannah. The authors thank John Shareshian and Michelle Wachs for organizing this meeting, bringing them together, and allowing BR to notice the similarity between tunnel hook coverings and monotonic ribbon tilings. The authors thank Ed Allen, Zach Hamaker, and Sarah Mason for many helpful conversations. BR was partially supported by NSF grant DMS-2246846.

\bibliographystyle{plain}
\bibliography{bibliography.bib}









\end{document}